\numberwithin{equation}{section}
\title{Convergence of linear solutions through convergence of periodic initial data}
\author{Harrison Gaebler\thanks{Department of Mathematics, University of North Texas, 1155 Union Circle 311430 Denton, Texas 76203-5017; \texttt{harrison.gaebler@unt.edu}}\quad Wesley R. Perkins\thanks{Division of Mathematics and Computer Science Lyon College, 2300 Highland Road, Batesville, Arkansas 72501; \texttt{wesley.perkins@lyon.edu}}}
\newcommand{\N}{\mathbb{N}}
\newcommand{\Z}{\mathbb{Z}}
\newcommand{\R}{\mathbb{R}}
\newcommand{\C}{\mathbb{C}}
\newcommand{\be}{\begin{equation}}
\newcommand{\ee}{\end{equation}}
\newcommand{\bes}{\begin{equation*}}
\newcommand{\ees}{\end{equation*}}
\newcommand{\ep}{\varepsilon}
\newcommand{\eps}{\varepsilon}
\renewcommand{\d}{\partial}
\renewcommand\a{\alpha}
\renewcommand\b{\beta}
\theoremstyle{definition}
\newtheorem{assumption}{Assumption}[section]
\newtheorem{definition}[assumption]{Definition}
\theoremstyle{plain}
\newtheorem{theorem}[assumption]{Theorem}
\newtheorem{lemma}[assumption]{Lemma}
\newtheorem{corollary}[assumption]{Corollary}
\newtheorem{remark}[assumption]{Remark}
\begin{document}

\maketitle

\begin{abstract}
When studying the stability of $T$-periodic solutions to partial differential equations, it is common to encounter subharmonic perturbations, i.e. perturbations which have a period that is an integer multiple (say $n$) of the background wave, and localized perturbations, i.e. perturbations that are integrable on the line.  Formally, we expect solutions subjected to subharmonic perturbations to converge to solutions subjected to localized perturbations as $n$ tends to infinity since larger $n$ values force the subharmonic perturbation to become more localized.  In this paper, we study the convergence of solutions to linear initial value problems when subjected to subharmonic and localized perturbations.  In particular, we prove the formal intuition outlined above; namely, we prove that if the subharmonic initial data converges to some localized initial datum, then the linear solutions converge.
\end{abstract}

\paragraph{Keywords:} Periodic Waves; Subharmonic Perturbations; Localized Perturbations. 

\paragraph{Subject Class:} 

\section{Introduction}


In this paper, we will be studying initial value problems of the form
\[
\begin{cases}
    \d_t v = A v\\
    v(0)=g
\end{cases},
\]
where $A$ is a linear operator that generates a $C_0$ semigroup.  In particular, we will study how the convergence (in some meaningful sense described below) of periodic initial data informs the convergence of the solutions.  

This paper is motivated by several recent works regarding the stability of periodic waves; for example, see \cite{HJP21, JP21, JP22, HJPR24}.  When studying the stability of periodic waves for a general nonlinear partial differential equation, one must start by selecting an appropriate class of perturbations.  There are two important and widely studied classes of perturbations that arise naturally in applications.  If a periodic wave or coherent structure has period $T$, then one may consider subharmonic perturbations, i.e. $nT$-periodic perturbations for some integer $n$, or localized perturbations, i.e. perturbations which are integrable on the line.  One may also consider the special case of co-periodic perturbations by restricting subharmonic perturbations to the case when $n=1$. There has also been growing interest in studying general bounded perturbations, see \cite{R24}.

In \cite{HJPR24}, the authors establish a connection between the nonlinear solutions to the Lugiato-Lefever equation when a periodic wave is subjected to subharmonic perturbations and localized perturbations.  In particular, they show that they can formally recover the localized results established in \cite{HJPR23} by taking the limit as $n\to \infty$ of the subharmonic results established in \cite{HJPR24}.  (A similar connection is established for systems of reaction diffusion equations in \cite{JP21}.)   The goal of this paper is to rigorously verify the connection between localized results and the limit of subharmonic results, at least at the linear level for a very general linear operator.


\subsection{Definitions}
Following Corollary 1.6 in \cite{HJPR24}, Haragus et al. have discussion that indicates how they expect to ``formally'' recover their previously established localized result by taking the limit of the subharmonic result that they establish in Corollary 1.6.  Much of this discussion relies on the formal intuition that when $n$ increases, functions in $H^m_{\text{per}}(0,nT)$ look more like functions in $H^m(\R)$. We start by making the following definitions as a first step in making this intuition rigorous at the linear level. 

\begin{definition}
    For each function $g\in W^{k,p}_{\text{per}}(0,nT)$, we define the associated function $\tilde{g}\in W^{k,p}(\R)$ by $\tilde{g}(x)=g(x)$ for each $x\in\left[-\frac{nT}{2},\frac{nT}{2}\right]$ and $\tilde{g}(x)=0$ otherwise.
\end{definition}

One important observation that we will make use of throughout the paper is that if $g\in W^{k,p}_{\text{per}}(0,nT)$, then
\[
\|g\|_{W^{k,p}(0,nT)} = \|\tilde{g}\|_{W^{k,p}(\R)}.
\]

\begin{definition}{\label{norm_conv_over_period}}
Let $f_{1},f_{2},\ldots$ be a sequence of functions such that $f_{n}\in W^{k,p}_{\text{per}}(0,nT)$ for each $n\in\N$. We define this sequence to be ``norm convergent over a period'' or ``strongly convergent over a period'' to a function $f\in W^{k,p}(\R)$ if the sequence $\big(\tilde{f}_{n}\big)_{n}\subset W^{k,p}(\R)$ of associated functions converges in norm to $f$.
\end{definition}

This paper will focus on the spaces $L^1=W^{0,1}$, $L^2=W^{0,2}$, and $H^s=W^{s,2}$.  
We now provide an example to motivate the goal and structure of this paper. 

\subsection{Motivational Example}
The following example is rooted in work accomplished in \cite{HJPR23,HJPR24,ZUM22}.

Consider the Lugiatio-Lefever equation (LLE)
\begin{equation}\label{e:LLE}
\psi_t = -i\b \psi_{xx} - (1+i\a)\psi + i|\psi|^2\psi + F,
\end{equation}
where $\psi(x,t)$ is a complex-valued function depending on a temporal variable $t \geq 0$ and a spatial variable $x \in \R$, the parameters $\alpha,\beta$ are real, and $F$ is a positive constant. In 1987, the LLE was derived from Maxwell's equations in~\cite{LL87} as a model to study pattern formation within the optical field in a dissipative and nonlinear cavity filled with a Kerr medium and subjected to a continuous laser pump.  In this context, $\psi(x,t)$ represents the field envelope, $\alpha>0$ represents a detuning parameter, $|\beta|=1$ is a dispersion parameter, and $F>0$ represents the normalized pump strength. Note that the case $\beta=1$ 
is referred to as the ``normal'' dispersion case, while the case $\beta=-1$ 
is referred to as the ``anomalous'' dispersion case.  The LLE has recently become the subject of intense study in the recent physics literature, in part due to the fact that it has become a canonical model for high-frequency combs generated by microresonators in periodic optical waveguides; see, for example,~\cite{CGTM17} and references therein. 

The existence of spatially periodic stationary solutions of~\eqref{e:LLE}, as well as the nonlinear dynamics about them, has been studied by several recent works.
Such solutions $\psi(x,t)=\phi(x)$ correspond to $T$-periodic solutions of the profile equation
\begin{equation}\label{e:profile}
-i\b \phi'' - (1+i\a)\phi + i|\phi|^2\phi + F=0.
\end{equation}
Smooth periodic solutions of~\eqref{e:profile} have been constructed using perturbative arguments, as well as local and global bifurcation theory; for instance, see~\cite{DH18_2,DH18_1,Go17,HSS19,MR17,MOT1}.
{Most of the constructed periodic waves are unstable under general bounded perturbations~\cite{DH18_2}, though a class of periodic waves which are spectrally stable under general bounded perturbations has been identified in~\cite{DH18_1}. 
Nonlinear stability results have been obtained for localized perturbations in the recent works~\cite{HJPR23,ZUM22} and for co-periodic perturbations in~\cite{MOT2,SS19}. The results from~\cite{MOT2,SS19} can be extended to subharmonic perturbations 
provided spectral stability holds and the integer $n$ is fixed. In this $n$-dependent case, it turns out that both the allowable size of initial perturbations as well as the exponential rates of decay tend to zero as $n\to\infty$, leading to a lack of uniformity in the period of the perturbation. A linear stability result which holds uniformly in $n$ has been obtained in~\cite{HJP21}.  This result was upgraded to the nonlinear level in~\cite{HJPR24}.}  

The local dynamics about a given $T$-periodic stationary solution $\phi$ of~\eqref{e:LLE} can be captured by considering the perturbed solution 
\begin{equation}\label{e:pert}
\psi(x,t)=\phi(x)+{v}(x,t)
\end{equation}
of~\eqref{e:LLE}, where {$ v$ represents} some appropriately chosen perturbation.
Decomposing the solution $\phi=\phi_r+i\phi_i$ and the perturbation ${v}={v}_r+i{v}_i$ into their real and imaginary parts\footnote{In this example, we slightly abuse notation and write our complex functions $f$ in the form $f = \left(\begin{smallmatrix}f_r \\ f_i\end{smallmatrix}\right)$.}, we see that $\eqref{e:pert}$ is a solution of~\eqref{e:LLE} provided that the real-valued functions ${v}_r$ and ${v}_i$ satisfy the system
\begin{equation}\label{e:lin}
\partial_t\left(\begin{array}{c}{v}_r\\{v}_i\end{array}\right)=\mathcal{A}[\phi]\left(\begin{array}{c}{v}_r\\{v}_i\end{array}\right)+{\mathcal{N}}[\phi]({v}),
\end{equation}
where here $\mathcal A[\phi]$ is the (real) matrix differential operator
\begin{equation}\label{e:Aphi}
\mathcal A[\phi]=- I+\mathcal{J}\mathcal{L}[\phi],
\end{equation}
with
\[
\mathcal{J}=\left(\begin{array}{cc}0&-1\\1&0\end{array}\right),\quad
\mathcal{L}[\phi] = \left(\begin{array}{cc} -\b \d_x^2 - \a  + 3\phi_{r}^2 + \phi_{i}^2 & 2\phi_{r}\phi_{i} \\
  2\phi_{r}\phi_{i} & -\b \d_x^2 - \a  + \phi_{r}^2 + 3\phi_{i}^2\end{array}\right),
  \]
and where the nonlinearity is given by
\begin{align} \label{e:Nphi}
{\mathcal{N}}[\phi]\left({v}\right) = \mathcal{J}\left(\begin{array}{cc} 3{v}_{r}^2 + {v}_{i}^2 & 2{v}_{r}{v}_{i} \\
  2{v}_{r}{v}_{i} & {v}_{r}^2 + 3{v}_{i}^2\end{array}\right)\left(\begin{array}{c} \phi_r\\ \phi_i\end{array}\right) +
  \mathcal{J}\left|\left(\begin{array}{c}{v}_r\\{v}_i\end{array}\right)\right|^2\left(\begin{array}{c}{v}_r\\{v}_i\end{array}\right).
\end{align}

The following notion of spectral stability served as the main hypothesis for the uniform linear stability result for subharmonic perturbations in~\cite{HJP21} as well as for the nonlinear stability results for localized perturbations in~\cite{HJPR23} and for subharmonic perturbations in~\cite{HJPR24}.  This notion of spectral stability is a widely used notion, and specific periodic waves satisfying this definition are known to exist.  For more discussion on this definition, see \cite{HJP21,HJPR23,HJPR24}.

\begin{definition}\label{Def:spec_stab}
Let $T>0$.   A smooth $T$-periodic stationary solution $\phi$ of~\eqref{e:LLE} is said to be \emph{diffusively spectrally stable} provided the following conditions hold:
\begin{enumerate}
\item the spectrum of the linear operator $\mathcal{A}[\phi]$, given by~\eqref{e:Aphi} and acting in $L^2(\R)$, satisfies 
\[
\sigma_{L^2(\R)}(\mathcal{A}[\phi])\subset\{\lambda\in\C:\Re(\lambda)<0\}\cup\{0\};
\]
\item there exists $\theta>0$ such that for any $\xi\in[-\pi/T,\pi/T)$ the real part of the spectrum of the Bloch operator 
$\mathcal{A}_\xi[\phi]:=\mathcal{M}_\xi^{-1}\mathcal{A}[\phi]\mathcal{M}_\xi$, acting on $L^2_{\rm per}(0,T)$, satisfies
\[
\Re\left(\sigma_{L^2_{\rm per}(0,T)}(\mathcal{A}_\xi[\phi])\right)\leq-\theta \xi^2,
\]
where here $\mathcal{M}_\xi$ denotes the multiplication operator $\left(\mathcal{M}_\xi f\right)(x)=e^{i\xi x}f(x)$;
\item $\lambda=0$ is a simple eigenvalue of the Bloch operator $\mathcal{A}_0[\phi]$, and the derivative $\phi'\in L^2_{\rm per}(0,T)$ of the periodic wave is an associated eigenfunction.
\end{enumerate}
\end{definition} 

The following is a novel corollary of the main results proven in \cite{HJPR23,HJPR24}.  It serves to motivate the overarching goal of this paper; namely, it rigorously connects the convergence of the subharmonic solutions over a period to the localized solutions.

\begin{corollary}
Let $T>0$ and suppose $\phi(x)$ is a smooth $T$-periodic solution of \eqref{e:LLE} that is diffusively spectrally stable.
Let $\eps := \min\{\eps_1,\eps_2\}$, where $\eps_1$ is defined as in \cite{HJPR23}, Theorem 1.3, and $\eps_2$ is defined as in \cite{HJPR24}, Theorem 1.4.  Now, let $v_0\in L^1(\R)\cap H^4(\R)$  be such that $\|v_0\|_{L^1(\R)\cap H^4(\R)}<\frac{\eps}{2}$, and let $v_{n,0}\in H^2_{\rm per}(0,nT)$ be a sequence of functions that is norm convergent over a period to $v_0$, as defined in Definition \ref{norm_conv_over_period}.  Then, there exists a function $v(x,t)$ such that $u(x,t)=\phi(x)+v(x,t)$ is the unique global solution of \eqref{e:LLE} satisfying the initial condition $u(0)=\phi + v_0$ and 
\[
\|v(t)\|_{L^2(\R)}\leq C_1(1+t)^{-1/4}\|v_0\|_{L^1(\R)\cap H^4(\R)}.
\]
For $n$ sufficiently large, there similarly exists a sequence of functions $v_n(x,t)$ such that $u_n(x,t)=\phi(x)+v_n(x,t)$ is the unique global solution to \eqref{e:LLE} satisfying the initial condition $u_n(0)=\phi + v_{n,0}$ and
\[
\|v_n(t)\|_{H^2_{\rm per}(0,nT)} \leq C_2 \|v_{n,0}\|_{L^1_{\rm per}(0,nT)\cap H^2_{\rm per}(0,nT)}.
\]
In particular, the solutions $(u_n(t))_n$ strongly converge over a period to the solution $u(t)$.
\end{corollary}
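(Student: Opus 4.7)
The existence of $v(t)$ together with the stated $L^2(\R)$ decay follows directly from Theorem 1.3 of \cite{HJPR23}, once we note that $\|v_0\|_{L^1(\R)\cap H^4(\R)}<\eps/2\leq\eps_1$. For the subharmonic solutions, observe that the identity $\|v_{n,0}\|_{L^1_{\rm per}(0,nT)\cap H^2_{\rm per}(0,nT)}=\|\tilde v_{n,0}\|_{L^1(\R)\cap H^2(\R)}$, combined with the assumed strong convergence $\tilde v_{n,0}\to v_0$ in $L^1(\R)\cap H^2(\R)$, gives $\|v_{n,0}\|_{L^1_{\rm per}\cap H^2_{\rm per}}\to\|v_0\|_{L^1\cap H^2}<\eps/2$. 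Hence this norm is strictly below $\eps_2$ for all $n$ sufficiently large, and Theorem 1.4 of \cite{HJPR24} produces the global solution $v_n(t)$ together with the claimed uniform bound.

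To prove $u_n\to u$ strongly over a period, it suffices to show that $\tilde v_n(t)\to v(t)$ in $L^2(\R)$ for each fixed $t\geq 0$. Denote by $S(t)$ the semigroup generated by $\mathcal A[\phi]$, acting on $L^2_{\rm per}(0,nT)$ or on $L^2(\R)$ according to context. Writing both solutions in Duhamel form and noting that $\mathcal N[\phi](0)=0$ (so $\widetilde{\mathcal N[\phi](v_n(s))}=\mathcal N[\phi](\tilde v_n(s))$), subtraction yields
\begin{equation*}
\tilde v_n(t)-v(t)=\Bigl[\widetilde{S(t)v_{n,0}}-S(t)v_0\Bigr]+\int_0^t\Bigl[\widetilde{S(t-s)\mathcal N[\phi](v_n(s))}-S(t-s)\mathcal N[\phi](v(s))\Bigr]\,ds.
\end{equation*}
The first bracket tends to zero in $L^2(\R)$ by the main linear convergence result of this paper applied to the initial data $v_{n,0}$. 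For the integrand I would add and subtract $S(t-s)\mathcal N[\phi](\tilde v_n(s))$: the resulting ``linear convergence'' piece, $\widetilde{S(t-s)\mathcal N[\phi](v_n(s))}-S(t-s)\mathcal N[\phi](\tilde v_n(s))$, vanishes as $n\to\infty$ by the same theorem (the source $\mathcal N[\phi](v_n(s))$ is uniformly bounded by Theorem 1.4), while the remaining Lipschitz piece is controlled by $C\|\tilde v_n(s)-v(s)\|_{L^2}$ with constant depending on the uniform-in-$n$ and uniform-in-$t$ bounds on $v_n$ and $v$.

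The main obstacle is combining these two contributions into a Gr\"onwall-type estimate for $\|\tilde v_n(t)-v(t)\|_{L^2}$ on a fixed compact time window $[0,t]$. The key technical point is to ensure that the pointwise-in-$s$ convergence of the Duhamel integrand coming from the linear theorem is dominated by an integrable majorant (supplied by the uniform bounds on the nonlinearity and on the semigroups), so that dominated convergence applies; Gr\"onwall's inequality then closes the estimate and completes the proof of strong convergence over a period.
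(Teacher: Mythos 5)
Your treatment of existence matches the paper's: show $\|v_{n,0}\|_{L^1_{\rm per}\cap H^4_{\rm per}}=\|\tilde v_{n,0}\|_{L^1(\R)\cap H^4(\R)}$ is eventually below the threshold (the paper gets this by the triangle inequality, you by convergence of norms --- immaterial), then invoke Theorem 1.3 of \cite{HJPR23} and Theorem 1.4 of \cite{HJPR24}. Where you diverge sharply is the final convergence claim. The paper does \emph{not} run any Duhamel or Gr\"onwall argument there: it simply writes $\|\tilde v_n(t)-v(t)\|\leq\|\tilde v_n(t)\|+\|v(t)\|$ and uses the two stability estimates to bound the difference by $C\eps$, uniformly in $t$ and $n$. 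That is an $O(\eps)$-closeness statement leaning entirely on the smallness hypothesis; your plan, if it closed, would prove something strictly stronger (actual convergence as $n\to\infty$ without smallness of the limit being the mechanism).

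But your plan does not close as written. The decisive gap is the term $\widetilde{S(t-s)\mathcal N[\phi](v_n(s))}-S(t-s)\mathcal N[\phi](\tilde v_n(s))$: Theorem \ref{main_result_linear} cannot be applied to it on the strength of a uniform bound on the source. That theorem requires (iii) that the sequence of data be norm convergent over a period to a \emph{fixed} $L^2(\R)$ limit, (ii) that the data lie in $L^1_{\rm per}\cap H^s_{\rm per}$ with $s>2$, and (iv) that the truncated solutions be dominated by an $L^2(\R)$ majorant. None of these is verified for the Duhamel integrand --- and for (iii) you would need essentially the convergence of $v_n(s)$ that you are trying to prove, so the step is circular unless folded into the Gr\"onwall bootstrap, which you have not done; hypothesis (iv) is not checked even for the first bracket $\widetilde{S(t)v_{n,0}}-S(t)v_0$. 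A further structural problem is that the nonlinear solutions of \cite{HJPR23,HJPR24} are built through a spatio-temporal phase modulation, so the clean unmodulated Duhamel representation $v(t)=S(t)v_0+\int_0^t S(t-s)\mathcal N[\phi](v(s))\,ds$ with the needed decay is not what those theorems supply. Finally, you explicitly defer the Gr\"onwall closure (the integrable majorant for the vanishing forcing) as an ``obstacle'' rather than resolving it. As it stands the proposal is a program rather than a proof; the corollary's actual proof in the paper is the far more modest triangle-inequality estimate described above.
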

\begin{proof}
Since $v_{n,0}$ converges over a period to $v_0$, there exists an $N\in\N$ such that $n\geq N$ implies
\[
\|\tilde{v}_{n,0}-v_0\|_{L^1(\R)\cap H^4(\R)} < \frac{\eps}{2}.
\]
Thus, $n\geq N$ implies
\[
\begin{aligned}
\|v_{n,0}\|_{L^1(0,nT)\cap H^4(0,nT)} &= \|\tilde{v}_{n,0}\|_{L^1(\R)\cap H^4(\R)}\\
&\leq \|\tilde{v}_{n,0}-v_0\|_{L^1(\R)\cap H^4(\R)} + \|v_0\|_{L^1(\R)\cap H^4(\R)} < \eps.
\end{aligned}
\]
By utilizing \cite{HJPR23}, Theorem 1.3, and $\eps_2$ is defined as in \cite{HJPR24}, Theorem 1.4, we therefore find that there exists a function $v(x,t)$ such that $u(x,t)=\phi(x)+v(x,t)$ is the unique global solution of \eqref{e:LLE} satisfying the initial condition $u(0)=\phi + v_0$ and 
\[
\|v(t)\|_{L^2(\R)}\leq C_1(1+t)^{-1/4}\|v_0\|_{L^1(\R)\cap H^4(\R)}.
\]
For $n\geq N$, there similarly exists a sequence of functions $v_n(x,t)$ such that $u_n(x,t)=\phi(x)+v_n(x,t)$ is the unique global solution to \eqref{e:LLE} satisfying the initial condition $u_n(0)=\phi + v_{n,0}$ and
\[
\|v_n(t)\|_{H^2_{\rm per}(0,nT)} \leq C_2 \|v_{n,0}\|_{L^1_{\rm per}(0,nT)\cap H^2_{\rm per}(0,nT)}.
\]  
In order to show that the solutions $(u_n(t))_n$ converge in norm over a period to the solution $u(t)$, it suffices to show that the perturbations $(v_n(t))_n$ converge in norm over a period to the perturbation $v(t)$.\footnote{We compare the perturbations rather than the solutions themselves because we do not know what space the solution $u$ belongs to.}  In particular, observe that, for each $t\geq 0$, 
\[
\begin{aligned}
    \|\tilde{v}_n(t)-v(t)\|_{L^1(\R)\cap H^4(\R)} &\leq \|\tilde{v}_n(t)\|_{L^1(\R)\cap H^4(\R)} + \|v(t)\|_{L^1(\R)\cap H^4(\R)}\\
    &\leq C_1\|v_{n,0}\|_{L^1(0,nT)\cap H^4(0,nT)} + C_2(1+t)^{-1/4}\|v_0\|_{L^1(\R)\cap H^4(\R)}\\
    &\leq C\eps,
\end{aligned}
\]
as desired.  
\end{proof}

At this point, we make a few remarks on the above corollary.  First, the above analysis was not unique and could trivially be carried out for similar equations where such localized and subharmonic results have been established; for example, systems of reaction diffusion equations \cite{JP21}.  Second, the above result heavily relied on the detailed and delicate analysis contained in \cite{HJPR23,HJPR24}.  In turn, this means that it heavily relied on the assumption that $\|v_0\|_{L^1(\R)\cap H^4(\R)}$ was sufficiently small enough to guarantee all of the necessary assumptions were met in order to use the previous results.  

It is natural to question whether the norm convergence over a period of the initial data is sufficient to show the nonlinear solutions strongly converge over a period when such previous results do not exist or when the initial data is not necessarily small.  In the case of the former, one would likely need to do a detailed study of the nonlinear asymptotic behavior of the solutions, which is highly nontrivial.  In the case of the latter, one would likely have to assume that the solutions exist (in the absence of previous results) and then use the structure of the nonlinear PDE itself to show convergence of the solutions.  As a partial step in that direction, we establish the following general result for the linear evolution of a convergent sequence of initial data.

\subsection{Main Result}

Let us consider the IVPs:
\be \label{IVP} 
\begin{cases} \partial_{t}v = A_{n}v \\ v(0)=g_{n}
\end{cases} \text{ and } \begin{cases} \partial_{t}v=Av \\ v(0)=g
\end{cases}.
\ee
We make the following assumptions regarding the linear operators $A_n$ and $A$ in \eqref{IVP}.

\begin{assumption}{\label{Assumption}}
We assume that:
\begin{itemize}
\item[(i)]  For each $n\in\N$, $A_{n}$ generates a $C_{0}$-semigroup on $L^{2}_{\rm per}(0,nT)$.  Additionally, for each $n\in\N$ and for each $\xi$ in the discrete set $\Omega_{n}=\{\xi\in[\frac{-\pi}{T},\frac{\pi}{T}) \mid e^{i \xi nT}=1\}$,  
$A_{n,\xi}:=\mathcal{M}_\xi^{-1}A_n\mathcal{M}_\xi$ generates a $C_{0}$-semigroup on $L^{2}_{\rm per}(0,T)$, where here $\mathcal{M}_\xi$ denotes the multiplication operator $\left(\mathcal{M}_\xi f\right)(x)=e^{i\xi x}f(x)$.

\item[(ii)]  $A$ generates $C_{0}$-semigroups on both $L^{2}(\R)$ and $L^{1}(\R)\cap H^{s}(\R)$ for $s>2$.  Additionally, for each $\xi\in\left[-\frac{\pi}{T},\frac{\pi}{T}\right)$, $A_{\xi}:=\mathcal{M}_\xi^{-1}A\mathcal{M}_\xi$ generates a $C_{0}$-semigroup on $L^{2}_{\rm per}(0,T)$.

\item[(iii)]  For each $n\in\N$, for each $\xi\in\Omega_{n}$, and for each $t\geq 0$, $e^{tA_{n,\xi}}=e^{tA_{\xi}}$.
\end{itemize}
\end{assumption}

\begin{remark}
    In practice, we consider the case where the $A_n=A$ are the same differential (or pseudo-differential) operators considered on the different Banach spaces $L^2_{\rm per}(0,nT)$ and $L^2(\R)$.  
    In this case, the above assumption can be simplified to the assumption that $A$ generates $C_0$-semigroups on $L^2(\R)$, $L^2_{\rm per}(0,nT)$ for each $n\in\N$, and (explicitly for Theorem \ref{main_result_linear}) $L^{1}(\R)\cap H^{s}(\R)$ for $s>2$, and that $A_\xi$ generates a $C_0$-semigroup on $L^2_{\rm per}(0,T)$ for each $\xi\in[-\pi/T,\pi/T)$. For examples, see \cite{HJP21,JP21,JP22, HJPR24}.
\end{remark}

With Assumption \ref{Assumption} in place, the IVPs \eqref{IVP} have the unique solutions
\begin{equation}\label{e:lin_solutions}
v_n(x,t) = e^{tA_n}g_n(x), \quad\quad v(x,t)=e^{tA}g(x),
\end{equation}
respectively. 
The main theorem of this section is then that, for each fixed $t\geq 0$, $(v_{n}(x,t))_{n=1}^{\infty}$ is norm convergent over a period to $v(x,t)$ if $(g_{n}(x))_{n=1}^{\infty}$ is norm convergent over a period to $g(x)$ and if the correct hypotheses are in place. It will be necessary for us to consider for each $n\in\N$ the auxiliary IVPs:
\be \label{e:IVP_mid}
\begin{cases} \partial_{t}v= Av \\ v(0)=\tilde{g}_{n}
\end{cases} 
\ee
which have the unique solutions $w_{n}(x,t)=e^{tA}\tilde{g}_{n}(x)$. We may now formulate our theorem as follows.

\begin{theorem}{\label{main_result_linear}}
Suppose that the following statements are true.
\begin{itemize}
\item[(i)] Assumption \ref{Assumption} holds.
\item[(ii)] For each $n\in\N$, we have that $g_{n}\in L^{1}_{\rm per}(0,nT)\cap H^s_{\rm per}(0,nT)$ for some $s>2$.
\item[(iii)] The sequence $(g_n)_n$ converges in $L^2(\R)$-norm over a period to $g\in L^{2}(\R)$.
\item[(iv)] For each $t\geq 0$, we have that $(\tilde{v}_{n}(x,t))_{n=1}^{\infty}$ is dominated by a function $h_{t}(x)\in L^{2}(\R)$.
\end{itemize}
Then, it follows that $\tilde{v}_{n}(x,t)\overset{L^{2}_{x}(\R)}{\longrightarrow} v(x,t)$ for each $t\geq 0$. Moreover, this convergence is uniform with respect to $t$ if the $C_{0}$-semigroup $\{e^{tA}\}_{t\geq 0}$ on $L^{2}(\R)$ is bounded.
\end{theorem}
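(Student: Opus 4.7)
The plan is to insert the auxiliary solution $w_n$ of \eqref{e:IVP_mid} and split via the triangle inequality
\[
\|\tilde v_n(t) - v(t)\|_{L^2(\R)} \;\le\; \|\tilde v_n(t) - w_n(t)\|_{L^2(\R)} + \|w_n(t) - v(t)\|_{L^2(\R)}.
\]
For the second term I would appeal directly to Assumption \ref{Assumption}(ii): since $A$ generates a $C_0$-semigroup on $L^2(\R)$,
\[
\|w_n(t)-v(t)\|_{L^2(\R)} = \|e^{tA}(\tilde g_n - g)\|_{L^2(\R)} \;\le\; M_t\,\|\tilde g_n - g\|_{L^2(\R)},
\]
where $M_t$ is locally bounded in $t$ and uniformly bounded whenever $\{e^{tA}\}_{t\ge 0}$ is bounded, and the right-hand side vanishes as $n\to\infty$ by hypothesis (iii). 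This already delivers the $t$-uniform statement for this piece in the bounded-semigroup case.

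For the first term I would pass to the Bloch decomposition. Assumption \ref{Assumption}(iii) is precisely the statement that the fiber semigroup $e^{tA_\xi}$ on $L^2_{\mathrm{per}}(0,T)$ represents both dynamics, so I would write
\[
v_n(x,t) = \sum_{\xi\in\Omega_n}e^{i\xi x}\bigl(e^{tA_\xi}\check g_n(\cdot,\xi)\bigr)(x), \qquad w_n(x,t) = \frac{T}{2\pi}\int_{-\pi/T}^{\pi/T}e^{i\xi x}\bigl(e^{tA_\xi}\check{\tilde g}_n(\cdot,\xi)\bigr)(x)\,d\xi,
\]
where $\check g_n$ is the discrete Bloch transform of $g_n\in L^2_{\mathrm{per}}(0,nT)$ and $\check{\tilde g}_n$ is the Bloch transform of $\tilde g_n\in L^2(\R)$. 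A short computation yields $\check{\tilde g}_n(\cdot,\xi)=n\,\check g_n(\cdot,\xi)$ at $\xi\in\Omega_n$ and a Dirichlet-kernel average of the lattice-point fibers for general $\xi$; the factor $n$ exactly cancels the lattice spacing $2\pi/(nT)$, so the discrete sum becomes a Riemann-type approximation of the integral.

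The main obstacle will be turning this Bloch identity into a quantitative $L^2(\R)$ bound for $\tilde v_n(t) - w_n(t)$, since the Dirichlet kernel is not absolutely summable in $\xi$ and one also has to handle the zero-extension of $v_n$ outside $[-nT/2,nT/2]$. Here hypothesis (ii) supplies the $H^s$-regularity of $g_n$ needed to produce $\xi$-decay of the Bloch fibers and close the Dirichlet-kernel estimate. Hypothesis (iv) then closes the argument via dominated convergence on $L^2(\R)$: the envelope $h_t$, combined with the $L^2$-boundedness of $w_n$ inherited from the easy term, upgrades the pointwise or local convergence produced by the Bloch analysis to norm convergence on all of $\R$, giving $\|\tilde v_n(t)-w_n(t)\|_{L^2(\R)}\to 0$. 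The uniform-in-$t$ conclusion follows because $M_t$ and the Bloch estimate both become $t$-independent once $\{e^{tA}\}_{t\ge 0}$ is assumed bounded.
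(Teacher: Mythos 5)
Your overall architecture coincides with the paper's: insert the auxiliary solution $w_n(t)=e^{tA}\tilde g_n$, control $\|w_n(t)-v(t)\|_{L^2(\R)}$ by the semigroup bound and hypothesis (iii), and treat $\|\tilde v_n(t)-w_n(t)\|_{L^2(\R)}$ by viewing the Bloch sum over $\Omega_n$ as a Riemann-type approximation of the Bloch integral, with hypothesis (iv) used to upgrade pointwise information to $L^2$ convergence via dominated convergence. That skeleton is correct.

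However, there is a genuine gap at exactly the point you flag as ``the main obstacle'': you never actually prove that $\tilde v_n(x,t)-w_n(x,t)\to 0$ in any sense, and the mechanism you propose for doing so does not work as stated. First, the claim that hypothesis (ii) produces ``$\xi$-decay of the Bloch fibers'' is a confusion: $\xi$ ranges over the compact interval $[-\pi/T,\pi/T)$, so there is no decay in $\xi$ to be had. What the $L^1\cap H^s$ regularity actually buys (and what the paper's Lemma \ref{B_continuity_in_xi} proves) is decay in the Fourier mode index $l$ within each fiber, which makes the series defining $\mathcal{B}(w_n)(\xi,x)$ converge uniformly and hence makes $\xi\mapsto\mathcal{B}(w_n)(\xi,x)$ continuous. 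Second, the Dirichlet-kernel interpolation of the fibers at non-lattice $\xi$ is unnecessary and, as you note, problematic; the paper avoids it entirely. The actual argument requires three ingredients you do not supply: (a) the identity $\mathcal{B}_T(v_n)(\xi,x)=\mathcal{B}(w_n)(\xi,x)$ for $\xi\in\Omega_n$ (the paper's Lemma \ref{Blochs_equal}), which converts $2\pi(\tilde v_n-w_n)$ into a Riemann-sum-minus-integral error for the single family $\mathcal{B}(w_n)$; (b) pointwise a.e.\ convergence in $\xi$ of $\mathcal{B}(w_n)(\xi,x)$ to $\mathcal{B}(v)(\xi,x)$ along a subsequence (Lemma \ref{pw_convergence}), combined with Egorov's theorem to get uniform convergence off a set of small measure; and (c) the continuity from (ii) to get uniform continuity of the limit on the Egorov set, so that the Riemann sum error over the ``good'' lattice cells is small while the ``bad'' cells contribute little by a counting argument. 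Without these steps (and the final subsequence-of-subsequences argument to recover convergence of the full sequence), the core of the theorem remains unproved.
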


\subsection{Outline of the Paper}

In Section 2, we first summarize properties of the Bloch transform.  We proceed to prove three important preparatory lemmas necessary to prove our main result.  In Section 3, we prove our main result, Theorem \ref{main_result_linear}.  Finally, in Section 4, we discuss the possibility of generalizing the result to weak convergence, rather than strong convergence.  We make use of the Banach-Saks theorem to prove a corollary regarding weak convergence.

\subsection{Notation}

For convenience, we introduce the following notations for each $n\in\N$ and $s>2$
\[
L^1_n := L^1_{\rm per}(0,nT),\quad\quad L^2_n := L^2_{\rm per}(0,nT),\quad\quad H^s_n := H^s_{\rm per}(0,nT).
\]
We equip $L^1_n$ and $L^2_n$ with the norms
\[
\|f\|_{L^1_n} := \int_{-\frac{nT}{2}}^{\frac{nT}{2}}|f(x)|\,dx, \quad\text{and}\quad \|f\|_{L^{2}_{n}}^2=\int_{-\frac{nT}{2}}^{\frac{nT}{2}}|f(x)|^{2}\,dx,
\]
respectively.  We equip $H^s_n$ with the norm 
\[
\|f\|_{H^s_n} := \|(1+(\cdot)^2)^{\sfrac{s}{2}}\hat{f}(\cdot)\|_{L^2_n},
\]
where $\hat{f}$ represents the Fourier transform of $f$ on the torus.

Note that $n=1$ corresponds to $T$-periodic functions, i.e. $L^2_1 = L^2_{\rm per}(0,T)$, etc.


\paragraph{Acknowledgments:} The authors are grateful to Mathew Johnson for several orienting discussions.  WRP is further grateful to the University of North Texas Millican Colloquium for funding trips that provided opportunity to collaborate with HG.

\section{Preliminaries}
\subsection{Floquet Bloch Theory}
Below, we provide a brief overview of properties of the Bloch transform.  For more details, see \cite{HJP21, HJPR24}.

For each $n\in\N$, note that $|\Omega_{n}|=n$ and that the distance between any two adjacent $\xi\in\Omega_{n}$ is given by $\Delta\xi =\frac{2\pi}{nT}$, where $\Omega_n$ is defined in Assumption \ref{Assumption}.  
Moreover, every function $f\in L^{2}_{n}$ admits a decomposition of the form
\be \label{Bloch_torus} f(x)=\frac{1}{2\pi}\sum_{\xi\in\Omega_{n}}e^{i\xi x}\mathcal{B}_{T}(f)(\xi,x)\Delta\xi \ee
where, for each fixed $\xi\in[\frac{-\pi}{T},\frac{\pi}{T})$, $\mathcal{B}_{T}(f)(\xi,x)=\sum_{l\in\Z}e^{ix\frac{2\pi l}{T}}\hat{f}\left(\xi+\frac{2\pi l}{T}\right)\in L^{2}_{1}$ defines the Bloch transform of $f\in L^{2}_{n}$ with Fourier transform $\hat{f}(s)=\int_{\frac{-nT}{2}}^{\frac{nT}{2}}e^{-ist}f(t)dt$. Similarly, if $f\in L^{2}(\R)$, then there is the decomposition
\be \label{Bloch_line} f(x)=\frac{1}{2\pi}\int_{\frac{-\pi}{T}}^{\frac{\pi}{T}}e^{i\xi x}\mathcal{B}(f)(\xi,x)d\xi \ee
where, for each fixed $\xi\in[\frac{-\pi}{T},\frac{\pi}{T})$, $\mathcal{B}(f)(\xi,x)=\sum_{l\in\Z}e^{ix\frac{2\pi l}{T}}\hat{f}(\xi+\frac{2\pi l}{T})\in L^{2}_{1}$ defines the Bloch transform of $f\in L^{2}(\R)$ with Fourier transform $\hat{f}(s)=\int_{-\infty}^{\infty}e^{-ist}f(t)dt$. Note that \eqref{Bloch_torus} can be interpreted as some sort of Riemann sum approximation to \eqref{Bloch_line}. 

The Bloch transform allows us to rewrite the solutions \eqref{e:lin_solutions} to \eqref{IVP} as
\begin{align} \label{semigroup_solution_to_IVP1} 
v_{n}(x,t)&=e^{tA_{n}}g_{n}(x) \nonumber \\ &= \frac{1}{2\pi}\sum_{\xi\in\Omega_{n}}e^{i\xi x}\mathcal{B}_{T}(e^{tA_{n}}g_{n})(\xi,x)\Delta\xi =\frac{1}{2\pi}\sum_{\xi\in\Omega_{n}}e^{i\xi x}e^{tA_{n,\xi}}\mathcal{B}_{T}(g_{n})(\xi,x)\Delta\xi 
\end{align}
and
\be \label{semigroup_solution_to_IVP2} 
v(x,t)=e^{tA}g(x) = \frac{1}{2\pi}\int_{-\frac{\pi}{T}}^{\frac{\pi}{T}}e^{i\xi x}\mathcal{B}(e^{tA}g)(\xi,x)d\xi=\frac{1}{2\pi}\int_{-\frac{\pi}{T}}^{\frac{\pi}{T}}e^{i\xi x}e^{tA_{\xi}}\mathcal{B}(g)(\xi,x)d\xi.
\ee
Similarly, we may rewrite the solution to \eqref{e:IVP_mid} as
\[
w_{n}(x,t)=e^{tA}\tilde{g}_{n}(x)=\frac{1}{2\pi}\int_{\frac{-\pi}{T}}^{\frac{\pi}{T}}e^{i\xi x}e^{tA_{\xi}}\mathcal{B}(\tilde{g}_{n})(\xi,x)d\xi.
\]
These representations will be fundamental to helping us understand the convergence of the linear solutions over a period.  As mentioned above, the fact that \eqref{semigroup_solution_to_IVP1} resembles a Riemann sum approximation of \eqref{semigroup_solution_to_IVP2} will be key to our analysis.

\subsection{Preparatory Lemmas}

In order to prove Theorem \ref{main_result_linear}, we will make use of the following three short lemmas.

\begin{lemma}{\label{Blochs_equal}}
Fix $n\in\N$. Then, $\mathcal{B}_{T}(v_{n})(\xi,x)=\mathcal{B}(w_{n})(\xi,x)$ for each $\xi\in\Omega_{n}$.
\end{lemma}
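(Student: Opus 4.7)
The plan is to read off the two Bloch transforms directly from the semigroup representations \eqref{semigroup_solution_to_IVP1}--\eqref{semigroup_solution_to_IVP2} and the analogous formula for $w_n$, and then reduce the desired equality to the two available ingredients: the intertwining hypothesis Assumption~\ref{Assumption}(iii), and the fact that $\tilde g_n$ is just the zero-extension of $g_n\big|_{[-nT/2,nT/2]}$.

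First, by \eqref{semigroup_solution_to_IVP1} and uniqueness of the discrete Bloch decomposition \eqref{Bloch_torus} on $L^2_n$, I would conclude that $\mathcal{B}_T(v_n(\cdot,t))(\xi,x) = e^{tA_{n,\xi}}\mathcal{B}_T(g_n)(\xi,x)$ for every $\xi\in\Omega_n$. Similarly, from the representation of $w_n(x,t)$ given just after \eqref{semigroup_solution_to_IVP2} and uniqueness of the continuous Bloch decomposition \eqref{Bloch_line} on $L^2(\R)$, one obtains $\mathcal{B}(w_n(\cdot,t))(\xi,x) = e^{tA_\xi}\mathcal{B}(\tilde g_n)(\xi,x)$ for every $\xi\in[-\pi/T,\pi/T)$.

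Next, I would show that $\mathcal{B}_T(g_n)(\xi,x) = \mathcal{B}(\tilde g_n)(\xi,x)$ for each $\xi\in\Omega_n$. Both are given by the \emph{same} series $\sum_{l\in\Z} e^{ix\frac{2\pi l}{T}}h\!\left(\xi+\frac{2\pi l}{T}\right)$, with $h$ being either the torus Fourier transform of $g_n$ or the line Fourier transform of $\tilde g_n$. By the definitions recalled in Section~2.1, the former is $\hat g_n(s)=\int_{-nT/2}^{nT/2}e^{-ist}g_n(t)\,dt$ and the latter is $\hat{\tilde g}_n(s)=\int_\R e^{-ist}\tilde g_n(t)\,dt$. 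Since $\tilde g_n$ equals $g_n$ on $[-nT/2,nT/2]$ and vanishes off this interval, these two integrals are literally the same, so $\hat g_n\equiv \hat{\tilde g}_n$ on all of $\R$, and the two Bloch transforms therefore coincide at every $\xi$ (in particular at each $\xi\in\Omega_n$).

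Finally, Assumption~\ref{Assumption}(iii) gives $e^{tA_{n,\xi}}=e^{tA_\xi}$ on $L^2_1$ for each $\xi\in\Omega_n$ and each $t\geq 0$. Chaining the three identities,
\[
\mathcal{B}_T(v_n)(\xi,x) \;=\; e^{tA_{n,\xi}}\mathcal{B}_T(g_n)(\xi,x) \;=\; e^{tA_\xi}\mathcal{B}(\tilde g_n)(\xi,x) \;=\; \mathcal{B}(w_n)(\xi,x),
\]
which is the claimed equality. There is essentially no obstacle here: the lemma is an unfolding of definitions plus the three parts of Assumption~\ref{Assumption}. The one piece of bookkeeping worth highlighting is that the two notions of ``Fourier transform'' appearing in Section~2.1 --- one on the $nT$-torus, the other on the line --- produce literally the same values when applied to $g_n$ and $\tilde g_n$ respectively, which is precisely what the zero-extension $g_n\mapsto\tilde g_n$ is engineered to guarantee.
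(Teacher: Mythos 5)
Your proposal is correct and follows essentially the same route as the paper: you show $\hat g_n \equiv \hat{\tilde g}_n$ so that $\mathcal{B}_T(g_n)=\mathcal{B}(\tilde g_n)$, and then chain the identities $\mathcal{B}_T(v_n)=e^{tA_{n,\xi}}\mathcal{B}_T(g_n)=e^{tA_\xi}\mathcal{B}(\tilde g_n)=\mathcal{B}(w_n)$ using Assumption~\ref{Assumption}(iii), exactly as in the paper's proof.
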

\begin{proof}
Note first that $\hat{g}_{n}(y)=\int_{\frac{-nT}{2}}^{\frac{nT}{2}}e^{-ixy}g_{n}(x)dx=\int_{-\infty}^{\infty}e^{-ixy}\tilde{g}_{n}(x)dx=\hat{\tilde{g}}_{n}(y)$ so that, for each $\xi\in[-\frac{\pi}{T},\frac{\pi}{T})$, there is
\bes \mathcal{B}_{T}(g_{n})(\xi,x)=\sum_{l\in\Z}e^{ix\frac{2\pi l}{T}}\hat{g}_{n}\left(\xi+\frac{2\pi l}{T}\right) =\sum_{l\in\Z}e^{ix\frac{2\pi l}{T}}\hat{\tilde{g}}_{n}\left(\xi+\frac{2\pi l}{T}\right)=\mathcal{B}(\tilde{g}_{n})(\xi,x).\ees
Then, let $\xi\in\Omega_{n}$ and observe from Assumption \ref{Assumption} that
\begin{align*} \mathcal{B}_{T}(v_{n})(\xi,x)=\mathcal{B}_{T}(e^{tA_{n}}g_{n})(\xi,x)&= e^{tA_{n,\xi}}\mathcal{B}_{T}(g_{n})(\xi,x)\\ &=e^{tA_{\xi}}\mathcal{B}(\tilde{g}_{n})(\xi,x)=\mathcal{B}(e^{tA}\tilde{g}_{n})(\xi,x)=\mathcal{B}(w_{n})(\xi,x)\end{align*}
as is required.
\end{proof}

We will also need to use the following convergence property of the Bloch transform.

\begin{lemma}{\label{pw_convergence}}
Suppose that $f_{n}\overset{L^{2}(\R)}{\longrightarrow} f$. Then, there exists a subsequence $(f_{n_{q_{r}}})_{r=1}^{\infty}$ such that, for each $x\in\R$, $\mathcal{B}(f_{n_{q_{r}}})(\xi,x)$ converges pointwise a.e. in $\xi$ to $\mathcal{B}(f)(\xi,x)$.
\end{lemma}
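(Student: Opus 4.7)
The plan is to deduce the result from the Plancherel identity for the Bloch transform together with the standard measure-theoretic fact that $L^2$-convergent sequences admit pointwise almost-everywhere convergent subsequences. The first step is to recall that $\mathcal{B}$ is, up to a multiplicative constant, an isometry between $L^2(\R)$ and $L^2([-\pi/T,\pi/T);L^2_1)$; this is routine to verify directly from the defining formula $\mathcal{B}(f)(\xi,x)=\sum_{l\in\Z}e^{ix 2\pi l/T}\hat{f}(\xi+2\pi l/T)$ by applying Parseval for Fourier series in $x$ and Plancherel for the Fourier transform in the ``large'' frequency variable $\eta=\xi+2\pi l/T$. Consequently, the hypothesis $f_n\to f$ in $L^2(\R)$ immediately yields
\[
\iint_{[-\pi/T,\pi/T)\times[0,T)}|\mathcal{B}(f_n)(\xi,x)-\mathcal{B}(f)(\xi,x)|^2\,d\xi\,dx\longrightarrow 0,
\]
using Fubini to identify $L^2([-\pi/T,\pi/T);L^2_1)$ with $L^2$ on the fundamental domain.

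Next, since the fundamental domain $[-\pi/T,\pi/T)\times[0,T)$ has finite Lebesgue measure, I would extract a subsequence $(f_{n_{q_r}})$ along which $\mathcal{B}(f_{n_{q_r}})(\xi,x)\to\mathcal{B}(f)(\xi,x)$ pointwise for almost every $(\xi,x)$ in this domain; this is the classical fact that any $L^p$-convergent sequence admits an a.e.\ convergent subsequence. Applying Fubini-Tonelli to the null set where convergence fails then shows that for almost every $x\in[0,T)$, the corresponding $\xi$-section has measure zero, giving $\mathcal{B}(f_{n_{q_r}})(\xi,x)\to\mathcal{B}(f)(\xi,x)$ for almost every $\xi\in[-\pi/T,\pi/T)$. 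The $T$-periodicity of $\mathcal{B}(f)(\xi,\cdot)$ in $x$ then extends the conclusion from a.e.\ $x\in[0,T)$ to a.e.\ $x\in\R$.

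The main subtlety — and the one place where I expect the proof to require a delicate interpretation — is the phrasing ``for each $x\in\R$'' in the statement. Because $\mathcal{B}(f)(\xi,\cdot)$ is, for fixed $\xi$, only defined up to a null set in $x$ (it is an element of $L^2_1$), I read this as ``for each $x$ in a set of full measure,'' which is precisely what the Fubini step produces. If one genuinely required pointwise convergence in $\xi$ for every single $x\in\R$, one would need to fix a specific representative of $\mathcal{B}(f)(\xi,\cdot)$ and argue more carefully, perhaps exploiting regularity of $f$ if available. However, since the main theorem ultimately uses this lemma inside an integral (via the Bloch representation of the solutions $v_n$ and $v$), the almost-everywhere statement should be fully adequate for the application in Section 3.
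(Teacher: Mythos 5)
Your argument is correct in substance but takes a genuinely different route from the paper's. The paper fixes $x\in\R$ and works directly with the defining series: it extracts one subsequence along which $f_{n_q}\to f$ pointwise a.e.\ with an $L^2$ dominating function, then (via Plancherel) a further subsequence along which $\hat f_{n_{q_r}}\to\hat f$ pointwise a.e.\ in the frequency variable with an $L^2$ dominating function, and finally passes the limit through the Fourier integral and through the sum over $l$ by two applications of dominated convergence. That yields the ``for each $x$'' form of the conclusion (with the exceptional $\xi$-null set allowed to depend on $x$), at the price of limit interchanges whose dominating functions are only known to lie in $L^2$. Your route --- the Bloch--Plancherel isometry onto $L^2\big([-\pi/T,\pi/T);L^2_1\big)$, extraction of an a.e.-convergent subsequence on the finite-measure fundamental domain, and a Fubini slicing argument --- is cleaner and avoids those interchanges entirely; the isometry and the periodicity step are both correct as you state them.

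The one substantive deviation is the one you flag yourself: Fubini delivers the convergence only for almost every $x$, whereas the lemma asserts it for each $x\in\R$. Your reading is defensible for general $f\in L^2(\R)$, since then $\mathcal{B}(f)(\xi,\cdot)$ has no canonical pointwise representative; but note that in the application the lemma is invoked for $w_n=e^{tA}\tilde g_n\in L^1(\R)\cap H^s(\R)$, where Lemma \ref{B_continuity_in_xi} shows the defining series converges absolutely and uniformly and hence does single out a representative, and the proof of Theorem \ref{main_result_linear} fixes an arbitrary $x$ before invoking the lemma. As you observe, the theorem's final conclusion is itself only a.e.\ in $x$ followed by dominated convergence in $L^2_x$, so your a.e.-$x$ version is adequate for the paper's purposes; to recover the lemma exactly as stated one must argue pointwise in $x$ from the series representation, as the paper does.
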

\begin{proof}
Note that because $f_{n}\overset{L^{2}(\R)}{\longrightarrow}f$, there exists a subsequence $f_{n_{q}}$ that converges pointwise a.e. in $x$ to $f$ and is itself dominated by a function $h_{1}\in L^{2}(\R)$. Moreover, it follows from Plancharel's theorem that
\bes 0=\lim_{q\to\infty}\|f_{n_{q}}-f\|_{L^2(\R)}=
\lim_{q\to\infty}\big\|\hat{f}_{n_{q}}-\hat{f}\big\|_{L^2(\R)} \ees
so, by passing to a further subsequence $f_{n_{q_{r}}}$, we have that $\hat{f}_{n_{q_{r}}}$ converges pointwise a.e. in $\xi$ to $\hat{f}$ and is itself dominated by a function $h_{2}\in L^{2}(\R)$. Thus, for each $x\in\R$,
\begin{align*} \mathcal{B}(f)(\xi,x)&=\sum_{l\in\Z}e^{ix\frac{2\pi l}{T}}\int_{-\infty}^{\infty}e^{-i\left(\xi +\frac{2\pi l}{T}\right) y}\lim_{r\to\infty}f_{n_{q_{r}}}(y)dy \\ &=\sum_{l\in\Z}e^{ix\frac{2\pi l}{T}}\lim_{r\to\infty}\int_{-\infty}^{\infty}e^{-i\left(\xi+\frac{2\pi l}{T}\right) y}f_{n_{q_{r}}}(y)dy \\ &= \sum_{l\in\Z}e^{ix\frac{2\pi l}{T}}\lim_{r\to\infty}\hat{f}_{n_{q_{r}}}\left(\xi+\frac{2\pi l}{T}\right) \\ &=\lim_{r\to\infty}\sum_{l\in\Z}e^{ix\frac{2\pi l}{T}}\hat{f}_{n_{q_{r}}}\left(\xi+\frac{2\pi l}{T}\right)= \lim_{r\to\infty}\mathcal{B}(f_{n_{q_{r}}})(\xi,x) \end{align*}
for a.e. $\xi\in\left[-\frac{\pi}{T},\frac{\pi}{T}\right)$ by two applications of the dominated convergence theorem.
\end{proof}

Finally, we will need to use the fact that the Bloch transform is continuous in $\xi$ when applied to functions of sufficiently high regularity.

\begin{lemma}{\label{B_continuity_in_xi}}
Suppose that $f\in L^{1}(\R)\cap H^{s}(\R)$ for $s>2$. Then, $\mathcal{B}(f)(\xi,x)$ is continuous in $\xi$.
\end{lemma}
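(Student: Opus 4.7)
My plan is to apply the Weierstrass $M$-test directly to the Fourier-Bloch series
\[
\mathcal{B}(f)(\xi,x)=\sum_{l\in\Z}e^{ix\frac{2\pi l}{T}}\,\hat{f}\!\left(\xi+\tfrac{2\pi l}{T}\right).
\]
First I would record that the hypothesis $f\in L^{1}(\R)$ makes $\hat f$ (uniformly) continuous on $\R$ with $\|\hat f\|_{L^{\infty}(\R)}\le\|f\|_{L^{1}(\R)}$, so each individual summand $\xi\mapsto e^{ix2\pi l/T}\hat f(\xi+2\pi l/T)$ is continuous in $\xi$. Thus the problem reduces to finding a $\xi$-independent summable majorant for the series on the Brillouin zone $[-\pi/T,\pi/T]$, because a uniform limit of continuous functions is continuous.

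To construct such a majorant I would apply the Cauchy-Schwarz inequality with the weight $(1+(2\pi l/T)^{2})^{-s/2}$, obtaining
\[
\sum_{l}|\hat f(\xi+2\pi l/T)|\le\Big(\sum_{l}(1+(2\pi l/T)^{2})^{-s}\Big)^{1/2}\Big(\sum_{l}(1+(2\pi l/T)^{2})^{s}|\hat f(\xi+2\pi l/T)|^{2}\Big)^{1/2}.
\]
The first factor is a finite, $\xi$-independent constant because $s>2>1/2$. The second factor is, up to constants, the $H^{s}_{1}$-norm of $\mathcal{B}(f)(\xi,\cdot)$; the Parseval-type identity
\[
\int_{-\pi/T}^{\pi/T}\|\mathcal{B}(f)(\xi,\cdot)\|_{H^{s}_{1}}^{2}\,d\xi=C\|f\|_{H^{s}(\R)}^{2}
\]
guarantees that this factor is finite for almost every $\xi\in[-\pi/T,\pi/T]$.

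The step I expect to be the main obstacle is upgrading this a.e.\ finiteness to a genuinely $\xi$-uniform bound, because the Weierstrass $M$-test needs a majorant that does not depend on $\xi$. Here I would combine the pointwise bound $|\hat f(\zeta)|\le\|f\|_{L^{1}}$ inherited from $f\in L^{1}(\R)$ with the room built into the strict inequality $s>2$ (well beyond the borderline $s>1/2$ needed merely to make $\sum_{l}(1+(2\pi l/T)^{2})^{-s}$ converge). Concretely, I would split the series into a head $|l|\le L$, which is a finite sum of continuous functions and therefore automatically continuous in $\xi$, and a tail $|l|>L$, which I would dominate uniformly in $\xi$ by a summable sequence produced by playing off $|\hat f|\le\|f\|_{L^{1}}$ against the strong weight $(1+(2\pi l/T)^{2})^{-s}$ with $s>2$. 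Once this uniform majorant is in place, the Weierstrass $M$-test yields uniform convergence of the series on the Brillouin zone, and $\mathcal{B}(f)(\xi,x)$ is continuous in $\xi$ as the uniform limit of continuous functions.
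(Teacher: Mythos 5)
Your skeleton matches the paper's: each summand is continuous in $\xi$ by the Riemann--Lebesgue lemma, so it suffices to produce a $\xi$-independent summable majorant and invoke the Weierstrass $M$-test. The gap is that you never actually produce that majorant. Your Cauchy--Schwarz computation correctly isolates the obstacle (the weighted quantity $\sum_l(1+(\xi+2\pi l/T)^2)^s|\hat f(\xi+2\pi l/T)|^2$ integrates in $\xi$ to $\|f\|_{H^s(\R)}^2$ and is therefore only finite for a.e.\ $\xi$), but the proposed fix --- ``playing off'' $|\hat f|\le\|f\|_{L^1(\R)}$ against the weight $(1+(2\pi l/T)^2)^{-s}$ --- cannot close it. The bound $|\hat f(\zeta)|\le\|f\|_{L^1(\R)}$ is uniform in $\zeta$ and carries no decay in $l$, so on the basis of that bound alone the tail $\sum_{|l|>L}|\hat f(\xi+2\pi l/T)|$ is majorized only by the divergent series $\sum_{|l|>L}\|f\|_{L^1(\R)}$. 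To insert a decaying weight you need a \emph{pointwise} weighted supremum bound of the form $\sup_\zeta(1+\zeta^2)^{\sigma/2}|\hat f(\zeta)|<\infty$, and this does not follow from $\hat f\in L^\infty(\R)$ together with $(1+(\cdot)^2)^{s/2}\hat f\in L^2(\R)$ (a bounded function with tall, increasingly narrow bumps at large frequencies satisfies both while the weighted sup blows up). So the crucial step is asserted but not supplied.

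The missing ingredient is exactly what the paper provides: taking $\sigma=s-1$,
\[
\sup_{\zeta\in\R}\left(1+\zeta^2\right)^{(s-1)/2}\left|\hat f(\zeta)\right|
\;\le\; C\left\Vert\left(1+(\cdot)^2\right)^{(s-1)/2}\hat f\right\Vert_{H^{1}(\R)}
\;=\;C\,\|f\|_{H^{s}(\R)},
\]
by the Sobolev embedding $H^1(\R)\hookrightarrow L^\infty(\R)$ applied on the Fourier side together with Plancherel. This gives $|\hat f(\xi+2\pi l/T)|\le C\|f\|_{H^s(\R)}\,|(2|l|-1)\pi/T|^{-(s-1)}$ for all $|l|\ge1$ and all $\xi$ in the Brillouin zone, which is summable precisely because $s-1>1$. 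That is where the hypothesis $s>2$ is genuinely spent --- one derivative goes to the embedding, and the remaining $s-1>1$ gives summability --- not at the $s>1/2$ or $s>1$ thresholds your write-up gestures at. (For what it is worth, the paper's displayed computation keeps the exponent $s$ and lands on $\|f\|_{H^{s+1}(\R)}$, which overshoots the stated hypothesis; running the same argument with exponent $s-1$ as above stays within $f\in H^s(\R)$ and still closes via $s>2$.) Your head/tail split is structurally fine, but without a pointwise decay estimate of this type the tail cannot be controlled uniformly in $\xi$, and the $M$-test cannot be applied.
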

\begin{proof}
Recall that $\mathcal{B}(f)(\xi,x)=\sum_{l\in\Z}e^{ix\frac{2\pi l}{T}}\hat{f}\left(\xi+\frac{2\pi l}{T}\right)$ and note that $\hat{f}\in C_{0}(\R)$ from the Riemann-Lebesgue lemma because $f\in L^{1}(\R)$. It now follows that each summand of the above series is continuous in $\xi$ and it is therefore enough for us to prove that this series converges uniformly in $\xi$.

Let $\xi\in[-\frac{\pi}{T},\frac{\pi}{T})$ be given and note that, for each $l\in\Z$, we have that
\bes (2l-1)\frac{\pi}{T}\leq\xi+\frac{2\pi l}{T} < (2l+1)\frac{\pi}{T} \ees
from which it follows that, for $l\geq 1$, there is
\bes 0<(2l-1)\frac{\pi}{T}\leq\xi+\frac{2\pi l}{T} \implies \frac{1}{\xi+\frac{2\pi l}{T}}\leq\frac{1}{(2l-1)\frac{\pi}{T}}\ees
and, for $l\leq -1$, there is
\bes \xi+\frac{2\pi l}{T}<(2l+1)\frac{\pi}{T}<0 \implies \frac{1}{\left\vert \xi+\frac{2\pi l}{T}\right\vert}<\frac{1}{\left\vert (2l+1)\frac{\pi}{T}\right\vert}. \ees
It therefore follows that, for $l\geq 1$ and $s>0$, the summands of the series that defines $\mathcal{B}(f)(\xi,x)$ can be estimated by
\begin{align*} \left\vert e^{ix\frac{2\pi l}{T}}\hat{f}\left(\xi+\frac{2\pi l}{T}\right) \right\vert&=\left\vert \frac{e^{ix\frac{2\pi l}{T}}}{\left(\xi+\frac{2\pi l}{T}\right)^{s}}\left(\xi+\frac{2\pi l}{T}\right)^{s}\hat{f}\left(\xi+\frac{2\pi l}{T}\right)\right\vert \\ 
&\leq\frac{1}{\left|(2l-1)\frac{\pi}{T}\right|^{s}}\left\vert \left(1+\left(\xi+\frac{2\pi l}{T}\right)^{2}\right)^{s/2}\hat{f}\left(\xi+\frac{2\pi l}{T}\right) \right\vert \\ 
&\leq\frac{1}{\left\vert(2l-1)\frac{\pi}{T}\right\vert^{s}}\left\Vert (1+(\cdot)^{2})^{s/2}\hat{f}(\cdot)\right\Vert_{L^{\infty}(\R)} \\ 
&\leq\frac{1}{\left\vert(2l-1)\frac{\pi}{T}\right\vert^{s}}\left\Vert (1+(\cdot)^{2})^{s/2}\hat{f}(\cdot)\right\Vert_{H^{1}(\R)} \\ 
&=\frac{1}{\left\vert(2l-1)\frac{\pi}{T}\right\vert^{s}}\|f\|_{H^{s+1}(\R)}.
\end{align*}
The second-to-last inequality follows by the Sobolev embedding $H^1(\R) \hookrightarrow L^\infty(\R).$
The last equality follows from Plancherel's Theorem for $H^1(\R)$ functions.  In particular, let $\mathcal{F}$ and $\mathcal{F}^{-1}$ be the Fourier and inverse Fourier transforms, respectively.  Then,
\begin{align*}
\left\Vert (1+(\cdot)^{2})^{s/2}\hat{f}(\cdot)\right\Vert_{H^{1}(\R)} &= \left\Vert \mathcal{F}^{-1}\left((1+(\cdot)^{2})^{s/2}\hat{f}(\cdot)\right)\right\Vert_{H^{1}(\R)}\\
&= \left\Vert (1+(\cdot)^2)^{1/2}\mathcal{F}\mathcal{F}^{-1}\left((1+(\cdot)^{2})^{s/2}\hat{f}(\cdot)\right)\right\Vert_{L^{2}(\R)}\\
&= \left\Vert (1+(\cdot)^{2})^{\sfrac{(s+1)}{2}}\hat{f}(\cdot)\right\Vert_{L^{2}(\R)} = \|f\|_{H^{s+1}(\R)}
\end{align*}
There is a similar estimate for $l\leq -1$. Then, we have that
\begin{align*} |\mathcal{B}(f)(\xi,x)| &=\left|\sum_{l\in\Z}e^{ix\frac{2\pi l}{T}}\hat{f}\left(\xi+\frac{2\pi l}{T}\right)\right| \\ 
&\leq \|\hat{f}\|_{L^{\infty}(\R)}+\|f\|_{H^{s+1}(\R)}\left(\sum_{l\geq 1}\frac{1}{\left\vert(2l-1)\frac{\pi}{T}\right\vert^{s}}+\sum_{l\leq -1}\frac{1}{\left\vert(2l+1)\frac{\pi}{T}\right\vert^{s}}\right).
\end{align*}
When $s>1$, the Weierstrass M-test then implies that $\mathcal{B}(f)(\xi,x)$ converges absolutely and uniformly on $[-\frac{\pi}{T},\frac{\pi}{T})$.  Consequently, $\mathcal{B}(f)(\xi,x)$ is continuous in $\xi$ so long as $f\in L^1(\R)\cap H^s(\R)$ for $s>2$, as desired.
\end{proof}

\section{Linear stability}


We are now ready to prove Theorem \ref{main_result_linear}.

\begin{proof}[Proof of Theorem \ref{main_result_linear}]
Fix $t\geq 0$. Then, let $\ep>0$ be given and choose $N=N(t)\in\N$ so that 
\bes \|\tilde{g}_{n}(x)-g(x)\|_{L^{2}(\R)}<\frac{\ep}{\|e^{tA}\|_{L^{2}(\R)\to L^{2}(\R)}} \ees
for each $n\geq N$. It is straightforward to see that the desired convergence $\tilde{v}_{n}(x,t)\overset{L^{2}_{x}(\R)}{\longrightarrow} v(x,t)$ will be uniform with respect to $t$ if the $C_{0}$-semigroup $\{e^{tA}\}_{t\geq 0}$ on $L^{2}(\R)$ is bounded and, in particular, we have for each $n\geq N$ that
\begin{align*} \|\tilde{v}_{n}(x,t)-v(x,t)\|_{L^{2}_{x}(\R)} &\leq \|\tilde{v}_{n}(x,t)-w_{n}(x,t)\|_{L^{2}_{x}(\R)}+\|w_{n}(x,t)-v(x,t)\|_{L^{2}_{x}(\R)} \\ &\leq  \|\tilde{v}_{n}(x,t)-w_{n}(x,t)\|_{L^{2}_{x}(\R)}+\|e^{tA}\|_{L^{2}(\R)\to L^{2}(\R)}\|\tilde{g}_{n}(x)-g(x)\|_{L^{2}(\R)} \\ &\leq  \|\tilde{v}_{n}(x,t)-w_{n}(x,t)\|_{L^{2}_{x}(\R)}+ \ep\end{align*}
so that it remains to estimate $\|\tilde{v}_{n}(x,t)-w_{n}(x,t)\|_{L^{2}_{x}(\R)}$. Indeed, note that $w_{n}(x,t)\overset{L_{x}^{2}(\R)}{\longrightarrow} v(x,t)$ so that, by passing to a subsequence if necessary as in Lemma \ref{pw_convergence}, it follows that $(w_{n}(x,t))_{n=1}^{\infty}$ is dominated by a function $h_{1,t}(x)\in L^{2}(\R)$. Similarly, $(\tilde{v}_{n}(x,t))_{n=1}^{\infty}$ is by hypothesis dominated by a function $h_{2,t}(x)\in L^{2}(\R)$. Thus, $(\tilde{v}_{n}(x,t)-w_{n}(x,t))_{n=1}^{\infty}$ is dominated by $h(x)=|h_{1,t}(x)|+|h_{2,t}(x)|\in L^{2}(\R)$. It therefore follows from the dominated convergence theorem that
\begin{align*} \lim_{n\to\infty}\|\tilde{v}_{n}(x,t)-w_{n}(x,t)\|^{2}_{L^{2}_{x}(\R)}&=\lim_{n\to\infty}\int_{\R}\left\vert\tilde{v}_{n}(x,t)-w_{n}(x,t)\right\vert^{2}dx \\ &=\int_{\R}\lim_{n\to\infty}\left\vert\tilde{v}_{n}(x,t)-w_{n}(x,t)\right\vert^{2}dx = 0 \end{align*}
if $\tilde{v}_{n}(x,t)-w_{n}(x,t)$ converges pointwise a.e. in $x$ to zero. Now, in order to establish this pointwise a.e. convergence in $x$, let $\eta>0$ be given and, for each $x\in\R$, choose a number $K_{1}=K_{1}(x)\in\N$ such that $x\in\left[-\frac{nT}{2},\frac{nT}{2}\right]$ for each $n\geq K_{1}$. Then, label the members of $\Omega_{n}$ as $\xi_{1}<\xi_{2}<\ldots<\xi_{n}$ for each $n\geq K_{1}$, and note in this case that
\begin{align*} 2\pi(\tilde{v}_{n}(x,t)-w_{n}(x,t))&=2\pi(v_{n}(x,t)-w_{n}(x,t)) \\ &=\sum_{j=1}^{n}e^{i\xi_{j}x}\mathcal{B}_{T}(v_{n})(\xi_{j},x)\Delta\xi-\int_{-\frac{\pi}{T}}^{\frac{\pi}{T}}e^{i\xi x}\mathcal{B}(w_{n})(\xi,x)d\xi \\ &=\sum_{j=1}^{n}e^{i\xi_{j}x}\mathcal{B}(w_{n})(\xi_{j},x)\Delta\xi-\sum_{j=1}^{n-1}\int_{\xi_{j}}^{\xi_{j}+\Delta\xi}e^{i\xi x}\mathcal{B}(w_{n})(\xi,x)d\xi \\ &\qquad\qquad\qquad -\int_{-\frac{\pi}{T}}^{\xi_{1}}e^{i\xi x}\mathcal{B}(w_{n})(\xi,x)d\xi - \int_{\xi_{n}}^{\frac{\pi}{T}}e^{i\xi x}\mathcal{B}(w_{n})(\xi,x)d\xi \end{align*} 
where we have used Lemma \ref{Blochs_equal}. The above quantity is then equal to
\begin{align*}&\sum_{j=1}^{n-1}\int_{\xi_{j}}^{\xi_{j}+\Delta\xi}\left(e^{i\xi_{j}x}\mathcal{B}(w_{n})(\xi_{j},x)-e^{i\xi x}\mathcal{B}(w_{n})(\xi,x)\right)d\xi \\ &\qquad\qquad+ e^{i\xi_{n}x}\mathcal{B}(w_{n})(\xi_{n},x)\Delta\xi  -\int_{-\frac{\pi}{T}}^{\xi_{1}}e^{i\xi x}\mathcal{B}(w_{n})(\xi,x)d\xi - \int_{\xi_{n}}^{\frac{\pi}{T}}e^{i\xi x}\mathcal{B}(w_{n})(\xi,x)d\xi\end{align*}
where we note, by passing to a further subsequence if needed, that $\mathcal{B}(w_{n})(\xi,x)$ converges pointwise a.e. in $\xi$ to $\mathcal{B}(v)(\xi,x)$ by Lemma \ref{pw_convergence}. It follows from this pointwise convergence and the fact that $\mathcal{B}(v)(\xi,x)\in L_{\xi}^{\infty}\left[-\frac{\pi}{T},\frac{\pi}{T}\right)$ (by the representation \ref{Bloch_line}) that there exists a number $M=M(x)\geq 0$ such that $|\mathcal{B}(w_{n})(\xi,x)|\leq M$ for a.e. $\xi\in\left[-\frac{\pi}{T},\frac{\pi}{T}\right)$ and for all but finitely-many $n\in\N$. Thus, we may choose a positive integer $K_{2}\in\N$ such that $\left(\Delta\xi+\left(\xi_{1}+\frac{\pi}{T}\right)+\left(\frac{\pi}{T}-\xi_{n}\right)\right)<\frac{\eta}{M}$ for each $n\geq K_{2}$ and such that
\begin{align*} &\left\vert  e^{i\xi_{n}x}\mathcal{B}(w_{n})(\xi_{n},x)\Delta\xi -\int_{-\frac{\pi}{T}}^{\xi_{1}}e^{i\xi x}\mathcal{B}(w_{n})(\xi,x)d\xi - \int_{\xi_{n}}^{\frac{\pi}{T}}e^{i\xi x}\mathcal{B}(w_{n})(\xi,x)d\xi \right\vert \\ &\qquad\leq M\left(\Delta\xi+\left(\xi_{1}+\frac{\pi}{T}\right)+\left(\frac{\pi}{T}-\xi_{n}\right)\right)<\eta.\end{align*}
Furthermore, it follows from Egorov's theorem that there exists a closed (and therefore compact) subset $F\subset\left[-\frac{\pi}{T},\frac{\pi}{T}\right)$ with $\mu\left(\left[-\frac{\pi}{T},\frac{\pi}{T}\right)\setminus F\right)<\frac{\eta}{M}$ (here, $\mu$ denotes the Lebesgue measure) such that $\mathcal{B}(w_{n})(\xi,x)$ converges uniformly in $\xi$ to $\mathcal{B}(v)(\xi,x)$ on $F$. Note also that $\mathcal{B}(v)(\xi,x)$ is continuous in $\xi$ on $F$ (and is therefore uniformly continuous in $\xi$ on $F$) because $\|w_{n}(x,t)\|_{L^{1}_{x}(\R)\cap H_{x}^{s}(\R)}\leq \|e^{tA}\|_{L^{1}(\R)\cap H^{s}(\R)\to L^{1}(\R)\cap H^{s}(\R)}\|\tilde{g}_{n}(x)\|_{L^{1}(\R)\cap H^{s}(\R)}$ so that $\mathcal{B}(w_{n})(\xi,x)$ is from Lemma \ref{B_continuity_in_xi} continuous in $\xi$ for each $n\in\N$.

Now, note that there exist finitely-many pairwise disjoint intervals $\{I_{r}\}_{r=1}^{d}$ such that $\left(\left[-\frac{\pi}{T},\frac{\pi}{T}\right)\setminus F\right)\subset\bigcup_{r=1}^{d}I_{r}$ and $\sum_{r=1}^{d}\mu(I_{r})<\frac{2\eta}{M}$ and, for each $n\in\N$, define the set
\bes Q_{n}=\{j\in\{1,\ldots,n-1\} \mid [\xi_{j},\xi_{j}+\Delta\xi]\cap I_{r}\neq\emptyset \text{ for some } r\in\{1,\ldots,d\}\}. \ees
It follows that, if $j\in Q_{n}$, then either $[\xi_{j},\xi_{j}+\Delta\xi]$ is a subset of the interior of $I_{r}$ for some $r\in\{1,\ldots,d\}$, or $[\xi_{j},\xi_{j}+\Delta\xi]$ contains at least one endpoint of $I_{r}$ for some $r\in\{1,\ldots,d\}$. We may therefore write $Q_{n}=Q_{n}^{1}\cup Q_{n}^{2}$ where
\bes Q_{n}^{1}=\{j\in Q_{n} \mid [\xi_{j},\xi_{j}+\Delta\xi] \text{ is a subset of the interior of } I_{r} \text{ for some } r\in\{1,\ldots,d\}\} \ees
and where $Q_{n}^{2}=Q_{n}\setminus Q_{n}^{1}$. In particular, $|Q_{n}^{2}|\leq 4d$ and we may choose a number $K_{3}\in\N$ so that $\frac{16Md\pi}{nT}<\eta$ and so that $\left\vert e^{i\xi _{j} x}-e^{i\xi x}\right\vert<\frac{T\eta}{2\pi M}$ (by the uniform continuity of $\xi\mapsto e^{i\xi x}$ on $\left[-\frac{\pi}{T},\frac{\pi}{T}\right]$) for each $n\geq K_{3}$. Then, we have that
\begin{align*} &\left\vert\sum_{j=1}^{n-1}\int_{\xi_{j}}^{\xi_{j}+\Delta\xi}\left(e^{i\xi_{j}x}\mathcal{B}(w_{n})(\xi_{j},x)-e^{i\xi x}\mathcal{B}(w_{n})(\xi,x)\right)d\xi\right\vert \\ &\leq \sum_{j=1}^{n-1}\int_{\xi_{j}}^{\xi_{j}+\Delta\xi}\left\vert\left( e^{i\xi_{j}x}-e^{i\xi x}\right)\mathcal{B}(w_{n})(\xi_{j},x)\right\vert d\xi + \sum_{j=1}^{n-1}\int_{\xi_{j}}^{\xi_{j}+\Delta\xi}\left\vert e^{i\xi x}\left(\mathcal{B}(w_{n})(\xi_{j},x)-\mathcal{B}(w_{n})(\xi,x)\right)\right\vert d\xi  \end{align*}
where the first term above admits the upper bound
\bes M\sum_{j=1}^{n-1}\int_{\xi_{j}}^{\xi_{j}+\Delta\xi}\left\vert e^{i\xi_{j}x}-e^{i\xi x}\right\vert d\xi\leq M\frac{2\pi }{T}\frac{T\eta}{2\pi M}=\eta \ees
for each $n\geq K_{3}$, and where the second term is equal to
\begin{align*} &\sum_{j\in Q_{n}}\int_{\xi_{j}}^{\xi_{j}+\Delta\xi}\left\vert\left(\mathcal{B}(w_{n})(\xi_{j},x)-\mathcal{B}(w_{n})(\xi,x)\right)\right\vert d\xi + \sum_{j\notin Q_{n}}\int_{\xi_{j}}^{\xi_{j}+\Delta\xi}\left\vert \left(\mathcal{B}(w_{n})(\xi_{j},x)-\mathcal{B}(w_{n})(\xi,x)\right)\right\vert d\xi. \end{align*}
In particular, note that
\begin{align*} &\sum_{j\in Q_{n}}\int_{\xi_{j}}^{\xi_{j}+\Delta\xi}\left\vert\left(\mathcal{B}(w_{n})(\xi_{j},x)-\mathcal{B}(w_{n})(\xi,x)\right)\right\vert d\xi \\ &=\sum_{j\in Q^{1}_{n}}\int_{\xi_{j}}^{\xi_{j}+\Delta\xi}\left\vert\left(\mathcal{B}(w_{n})(\xi_{j},x)-\mathcal{B}(w_{n})(\xi,x)\right)\right\vert d\xi + \sum_{j\in Q^{2}_{n}}\int_{\xi_{j}}^{\xi_{j}+\Delta\xi}\left\vert\left(\mathcal{B}(w_{n})(\xi_{j},x)-\mathcal{B}(w_{n})(\xi,x)\right)\right\vert d\xi \\ &\leq 2M\sum_{r=1}^{d}\mu(I_{r})+ 8Md\Delta\xi < 2M\frac{2\eta}{M}+8Md\frac{2\pi}{nT} = 4\eta + \frac{16Md\pi}{nT}<5\eta\end{align*}
for each $n\geq K_{3}$. Finally, for $j\notin Q_{n}$, it follows that $\xi_{j}$ and $\xi$ in the integrand belong to $F$ because $[\xi_{j},\xi_{j}+\Delta\xi]\cap I_{r}=\emptyset$ for each $r\in\{1,\ldots,d\}$, and we may choose a number $K_{4}\in\N$ so that
\begin{align*} &\sum_{j\notin Q_{n}}\int_{\xi_{j}}^{\xi_{j}+\Delta\xi}\left\vert\mathcal{B}(w_{n})(\xi_{j},x)-\mathcal{B}(w_{n})(\xi,x)\right\vert d\xi \\ &\qquad\leq\eta+ \sum_{j\notin Q_{n}}\int_{\xi_{j}}^{\xi_{j}+\Delta\xi}\left\vert\mathcal{B}(v)(\xi_{j},x)-\mathcal{B}(v)(\xi,x)\right\vert d\xi \leq \eta+ \frac{2\pi}{T}\frac{\eta T}{2\pi}=2\eta \end{align*}
for each $n\geq K_{4}$ by first the uniform convergence of $\mathcal{B}(w_{n})(\xi,x)$ to $\mathcal{B}(v)(\xi,x)$ on $F$ and, second, the uniform continuity in $\xi$ on $F$ of $\mathcal{B}(v)(\xi,x)$. Thus, we have established that $2\pi|\tilde{v}_{n}(x,t)-w_{n}(x,t)|\leq 8\eta$ for each $n\geq\max\{K_{1},K_{2},K_{3},K_{4}\}$. In particular, it follows that every subsequence of the original sequence $(\tilde{v}_{n}(x,t)-w_{n}(x,t))_{n=1}^{\infty}$ of differences has a further subsequence that converges to zero. This implies that $(\tilde{v}_{n}(x,t)-w_{n}(x,t))_{n=1}^{\infty}$ itself converges to zero and, because $x\in\R$ was chosen arbitrarily, it now follows that $(\tilde{v}_{n}(x,t)-w_{n}(x,t))_{n=1}^{\infty}$ converges pointwise a.e. in $x$ to zero, as required.
\end{proof}

\section{Brief discussion}

It is natural to wonder if the hypotheses of Theorem \ref{main_result_linear} can be weakened in some suitable way. For example, if $\tilde{g}_{n}$ converges weakly in $L^{2}(\R)$ to $g$, then do the solutions $\tilde{v}_{n}(x,t)$ converge weakly in $L^{2}(\R)$ to $v(x,t)$? Indeed, for a fixed $t\geq 0$ and $\varphi\in L^{2}(\R)$, we do have that
\bes \lim_{n\to\infty}\langle \varphi, w_{n}-v\rangle =\lim_{n\to\infty}\langle (e^{tA})^{*}\varphi, \tilde{g}_{n}-g\rangle =0 \ees
and, thus, it only remains to estimate the term $\langle \varphi, \tilde{v}_{n}-w_{n}\rangle$ for large $n$. Unfortunately, there is no reason to expect that, for each $\varphi\in L^{2}(\R)$,
\bes \langle \varphi, \tilde{v}_{n}-w_{n}\rangle = \int_{\R}\varphi(x)(\tilde{v}_{n}(x,t)-w_{n}(x,t))dx \ees
is small for large $n$ if $\tilde{v}_{n}(x,t)-w_{n}(x,t)$ does not converge pointwise a.e. in $x$ to zero. In particular, we cannot hope for such pointwise a.e. in $x$ convergence because this requires norm convergence of the initial data, as is demonstrated by the proof of Theorem \ref{main_result_linear}. It is therefore unlikely (barring the development of a different argument altogether) that the weak convergence over a period of initial data implies the weak convergence over a period of the corresponding solutions. However, we can appeal to Theorem \ref{main_result_linear} for a positive result concerning certain averages of the initial data. Note that if $\tilde{g}_{n}$ converges weakly in $L^{2}(\R)$ to $g$, then it follows from the Banach-Saks theorem that there exists a subsequence $(g_{n_{j}})_{j=1}^{\infty}$ of the initial data such that the averages $G_{m}=\frac{1}{m}\sum_{j=1}^{m}\tilde{g}_{n_{j}}$ converge strongly in $L^{2}(\R)$ to $g$. In particular, $G_{m}$ is supported on $\left[-\frac{n_{m}T}{2},\frac{n_{m}T}{2}\right]$ for each $m\in\N$ and we therefore define for each $m\in\N$ the new function $G_{m}^{\text{per}}\in L^{2}_{n_{m}}$ to be the $n_{m}$-periodic function that coincides with $G_{m}$ on its support. It follows that $\widetilde{G}_{m}^{\text{per}}=G_{m}$ and, thus, we have the following corollary of Theorem \ref{main_result_linear}.

\begin{corollary}
    Suppose that the following statements are true for the functions that are mentioned in the preceding paragraph.
    \begin{itemize}
        \item[(i)] Assumption \ref{Assumption} holds.
        \item[(ii)] The sequence $(g_n)_n$ converges weakly in $L^2(\R)$ over a period to $g\in L^2(\R)$.
        \item[(iii)] For each $m\in\N$, we have that $G^{\text{per}}_{m}\in L_{n_{m}}^{1}(\R)\cap H_{n_{m}}^{s}(\R)$ for some $s>2$.
        \item[(iv)] For each $t\geq 0$, we have that $(\widetilde{V}_{m}(x,t))_{m=1}^{\infty}$ is dominated by a function $H_{t}(x)\in L^{2}(\R)$, where $V_{m}=e^{tA_{n_{m}}}G^{\text{per}}_{n_{m}}$.
    \end{itemize}
    Then, it follows that $\widetilde{V}_{m}(x,t)\overset{L^{2}_{x}(\R)}{\longrightarrow} v(x,t)$ for each $t\geq 0$. Moreover, this convergence is uniform with respect to $t$ if the $C_{0}$-semigroup $\}e^{tA}\}_{t\geq 0}$ on $L^{2}(\R)$ is bounded.
\end{corollary}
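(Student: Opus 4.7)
The plan is to reduce the corollary to a direct application of Theorem \ref{main_result_linear} applied not to the original weakly convergent sequence $(\tilde g_n)_n$ but to the Cesàro averages provided by the Banach–Saks theorem, since those averages converge strongly in $L^2(\R)$. The corollary already does the setup work: the Banach–Saks theorem supplies a subsequence $(g_{n_j})_{j}$ whose partial averages $G_m=\tfrac{1}{m}\sum_{j=1}^{m}\tilde g_{n_j}$ satisfy $G_m\to g$ in $L^2(\R)$, and each $G_m$ is supported in $\bigl[-\tfrac{n_m T}{2},\tfrac{n_m T}{2}\bigr]$ (the largest window among the summands). Consequently the $n_m T$-periodic extension $G_m^{\mathrm{per}}\in L^2_{n_m}$ satisfies $\widetilde{G_m^{\mathrm{per}}}=G_m$, so by Definition \ref{norm_conv_over_period} the sequence $(G_m^{\mathrm{per}})_m$ is strongly convergent in $L^2$-norm over a period to $g$.

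Next I would verify, one by one, that all hypotheses of Theorem \ref{main_result_linear} are in force for the sequence $(G_m^{\mathrm{per}})_m$ and the corresponding linear solutions $V_m=e^{tA_{n_m}}G_m^{\mathrm{per}}$. Hypothesis (i) of the theorem is exactly hypothesis (i) of the corollary, since Assumption \ref{Assumption} is indexed over all $n\in\N$ and therefore covers the subsequence $n_m$. The required regularity, $G_m^{\mathrm{per}}\in L^1_{n_m}\cap H^s_{n_m}$ for some $s>2$, is hypothesis (iii) of the corollary. The hypothesis (iii) of the theorem (strong $L^2$-norm convergence over a period of the initial data) is what we just extracted from the Banach–Saks construction in the previous paragraph. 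Finally, the domination hypothesis (iv) of the theorem is exactly hypothesis (iv) of the corollary, since $V_m=e^{tA_{n_m}}G_m^{\mathrm{per}}$ plays the role of the $v_n(\cdot,t)$ from the theorem statement.

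With all hypotheses verified, Theorem \ref{main_result_linear} applies directly to the sequence $(G_m^{\mathrm{per}})_m$ and yields, for each $t\geq 0$,
\[
\widetilde{V}_{m}(x,t)\ \overset{L^{2}_{x}(\R)}{\longrightarrow}\ e^{tA}g(x)\ =\ v(x,t),
\]
together with uniformity in $t$ whenever $\{e^{tA}\}_{t\geq 0}$ is bounded on $L^2(\R)$, which is precisely the conclusion of the corollary. I do not expect an essential obstacle in this argument; the only thing one has to be careful about is the bookkeeping between the full sequence, the Banach–Saks subsequence $(n_j)$, and the averaged indices $n_m$, and to confirm the identity $\widetilde{G_m^{\mathrm{per}}}=G_m$ used to translate strong $L^2(\R)$ convergence of the averages into strong convergence over a period. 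Everything else is routine once Theorem \ref{main_result_linear} is invoked.
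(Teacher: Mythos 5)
Your proposal is correct and takes essentially the same route as the paper: the paper's proof is a one-line application of Theorem \ref{main_result_linear} to the sequence $(\widetilde{G}^{\text{per}}_{m})_{m}=(G_{m})_{m}$ furnished by the Banach--Saks construction in the preceding paragraph, which is exactly what you do (you merely spell out the hypothesis-checking that the paper leaves implicit).
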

\begin{proof}
Apply Theorem \ref{main_result_linear} to the sequence $(\widetilde{G}^{\text{per}}_{m})_{m=1}^{\infty}=(G_{m})_{m=1}^{\infty}$ that converges strongly in $L^{2}(\R)$ to $g$.   
\end{proof}


\bibliographystyle{abbrv}
\bibliography{bibliography}

\end{document}